\theoremstyle{plain}
\newtheorem{thm}{Theorem}
\DeclareMathOperator{\PP}{{\bf P}}
\DeclareMathOperator{\E}{{\bf E}}
\DeclareMathOperator{\Var}{{\bf Var}}
\newcommand{\iid}{\mbox{i.i.d.\frenchspacing}}
\newcommand{\al}{\mbox{al.\frenchspacing}}
\begin{document}

\LARGE
\begin{center}
\textbf {On the asymptotic normality of finite population $L$-statistics}\\[2.25\baselineskip]

\small
\text{Andrius {\v C}iginas}\\

\medskip

{\footnotesize
Vilnius University Institute of Mathematics and Informatics, LT-08663 Vilnius, Lithuania
}\\[2.25\baselineskip]
\end{center}

\small
\begin{abstract}

   We give sufficient conditions for the asymptotic normality of linear combinations of order statistics ($L$-statistics) in the case of simple random samples without replacement. In the first case, restrictions are imposed on the weights of $L$-statistics. The second case is on trimmed means, where we introduce a new finite population smoothness condition.  %dar pagalvoti
   
\end{abstract}
\vskip 2mm

\normalsize

\noindent\textbf{Keywords:} finite population, sampling without replacement, $L$-statistic, trimmed mean, Hoeffding decomposition, asymptotic normality

\vskip 2mm

\noindent\textbf{MSC classes:} 62E20

\section{Introduction and results}\label{s:1}

In the case of independent and identically distributed (\iid{}) observations, asymptotic normality of $L$-statistics under various conditions was shown by Chernoff \emph{et \al{}} \cite{ChGJ_1967}, Shorack \cite{Sh_1972}, Stigler \cite{S_1973, S_1974} and Mason \cite{M_1981}, among others. See also Serfling \cite[Chapter 8]{S_1980}. In the case of samples \emph{drawn without replacement}, there are only the few works on the asymptotic normality of $L$-statistics, e.g., the paper of Shao \cite{Sh_1994}, where $L$-statistics under complex sampling designs are considered, and the work of Chatterjee \cite{Ch_2011} on the case of sample quantile.

Let ${\cal X}=\{x_1,\dots, x_N\}$ denote measurements of the study variable $x$ of the population ${\cal U}=\{u_1,\dots, u_N\}$ of subjects or objects, i.e., a real function $f\colon {\cal U} \to \mathbb R$ assigns a fixed value for each element of the population ${\cal U}$. Let $\mathbb X=\{X_1, \dots, X_n\}$ be measurements of units of the simple random sample of size $n<N$ drawn without replacement from the population. The observations $X_1, \dots, X_n$ are identically distributed, but they are not independent. Let $X_{1:n} \leq \dots \leq X_{n:n}$ denote the order statistics of $\mathbb X$. Define the $L$-statistic 
\begin{equation}\label{L_stat}
L_n=L_n(\mathbb X)=\frac{1}{n}\sum_{j=1}^n c_j X_{j:n}.
\end{equation}
Here $c_1,\ldots ,c_n$ is a given sequence of real numbers called weights. 
%$L$-statistics generalize the well-known estimators such as the sample mean (sum), trimmed means, empirical quantiles, and Gini's mean difference (each of them can be written in form (\ref{L_stat})).
Usually these weights are determined by the weight function $J\colon(0,1) \to \mathbb R$ as follows:
\begin{equation*}\label{J_gen_c}
c_j=J\left(\frac{j}{n+1}\right), \quad 1\leq j\leq n.
\end{equation*} 
%Denote $\sigma_L^2=\Var L_n$.
Further, when we talk about the asymptotics of $L$-statistics, we use centered statistics (\ref{L_stat}) with $n^{1/2}$ norming, i.e.,
\begin{equation}\label{S_stat}
S_n=S_n(\mathbb X)=n^{1/2}(L_n-\E L_n).
\end{equation} 
Denote $\tilde{\sigma}_{n}^2=\Var S_n$. We are interested in the normal approximation to the distribution function
\begin{equation*}\label{F_n}
F_n(x)=\PP \left\{ S_n\leq x\tilde{\sigma}_{n} \right\}. 
\end{equation*}
Note that for correct formulations of the following asymptotic results for finite population statistics, we need to consider a sequence of populations ${\cal X}_r=\{x_{r, 1}, \ldots, x_{r, N_r} \}$, with $N_r\to \infty$ as $r\to \infty$, and a sequence of statistics $L_{n_r}(\mathbb X_r)$, based on simple random samples $\mathbb X_r=\{X_{r, 1}, \ldots, X_{r, n_r}\}$ drawn without replacement from ${\cal X}_r$. In order to keep the notation simple, we shall skip the subscript $r$ in what follows.

The sample mean is the separate case of \eqref{L_stat}, where $c_j\equiv 1$, $1\leq j\leq n$. In this case, for samples drawn without replacement, the classical result on asymptotic normality was established by Erd\H{o}s and R\'enyi \cite{ER_1959}, see also H{\'a}jek \cite{H_1960}.
% ... ir patikslintas Hajek'o
Similarly as in the case of \iid{} observations, the key asymptotic condition in \cite{ER_1959} is the Lindeberg-type condition: for every $\varepsilon>0$,
\begin{equation}\label{lind_0}
\sigma^{-2} \E (X_1-\E X_1)^2 \mathbb{I}{\{ \left| X_1-\E X_1 \right|>\varepsilon\tau\sigma\}}=o(1)
\quad \text{as} \quad N, n \to \infty,
\end{equation}
where $\sigma^2=\Var X_1$ and $\tau^2=Npq$ with $p=n/N$, $q=1-p$, and $\mathbb I\{ \cdot\}$ is the indicator function. Condition \eqref{lind_0} is called the Erd\H{o}s--R\'enyi condition.
%cia dar vertetu pamineti Hajek'a. (By Hajek [], this condition is necesarily for the mean to be asymptotic normal)
Since $L$-statistics can be viewed as a certain generalization of the sample mean, one can expect that conditions, sufficient for the asymptotic normality, should be similar to that used in \cite{ER_1959}, but with some additional restrictions to the weights $c_1,\ldots ,c_n$.

On the other hand, $L$-statistics is a subclass of the more general class of symmetric statistics (symmetric functions of observations). An asymptotic behaviour of symmetric statistics differs not so much from that of the simplest linear statistic (the sample mean is an example), in the sense that, e.g., using Hoeffding's decomposition of Bloznelis and G{\" o}tze \cite{BG_2001}, we can write
\begin{equation}\label{short_exp}
S_n=U_1+R_1, \quad \text{where} \quad U_1=\sum_{i=1}^n g_1(X_i)
\end{equation}
is a linear statistic and (we expect that) $R_1$ is a stochastically smaller statistic. Then $S_n$ in (\ref{short_exp}) is asymptotically standard normal if its linear part $U_1$ is asymptotically standard normal, and $R_1$ is a degenerate statistic as the sample size 
\begin{equation*}
n_{*}:=\min\{n,N-n\}
\end{equation*}
increases. In particular, by \cite{BG_2001}, the components $U_1$ and $R_1$ are centered and uncorrelated, and (by Theorem 1 of \cite{BG_2001}) the variance of $R_1$ is bounded as follows: $\E R_1^2 \leq \delta_2$, where it is expected that the particular quantity $\delta_2=o(1)$ as $n_* \to \infty$. In the present paper, we apply the general result on asymptotic normality of the symmetric statistics (see Proposition 3 of \cite{BG_2001}) to the case of the $L$-statistics, i.e., we replace the condition imposed on $\delta_2$ by conditions expressed in terms of the weights $c_1, \ldots, c_n$ and the population ${\cal X}$.

We assume, without loss of generality, that the values of the population ${\cal X}$ are arranged in non-decreasing order, i.e., $x_1\leq \cdots \leq x_N$. Let us use the convention ${a\choose b}=0$ for $a<b$. In the case of $L$-statistic \eqref{S_stat}, the function $g_1(\cdot)$ in \eqref{short_exp} is represented by, for $1\leq k \leq N$,
\begin{equation}\label{g_1}
g_1(x_k)=-n^{-1/2}\sum_{j=1}^{n} c_j \sum_{i=1}^{N-1} \left ( \mathbb{I}{\{i\geq k\}}-\frac{i}{N}\right) {i-1\choose j-1}{N-i-1\choose n-j}{N-2\choose n-1}^{-1} (x_{i+1}-x_i),
\end{equation} 
see \v Ciginas \cite{C_2012}. Denote $\sigma_1^2=\E g_1^2(X_1)$.

First, we consider an $L$-statistic of the general form \eqref{L_stat}, and we will require a certain smoothness of its weight function $J(\cdot)$. Reformulate Erd\H{o}s--R\'enyi condition \eqref{lind_0}: for every $\varepsilon>0$,
\begin{equation}\label{lind_2}
 \sigma_1^{-2} \E g_1^2(X_1) \mathbb{I}{\{ \left| g_1(X_1) \right|>\varepsilon\tau\sigma_1\}}=o(1)
\quad \text{as} \quad n_* \to \infty.
\end{equation}
Then we have the following statement.
\begin{thm}\label{t:norm_l}
Assume that $n_* \to \infty$ and $\tilde{\sigma}_{n}\geq c_1>0$ for all $n_*$.
Suppose that $\E X_1^2\leq c_2<\infty$ and that $J(\cdot)$ is bounded and satisfies the H{\" o}lder condition of order $\delta>1/2$ on $(0, 1)$. Let \eqref{lind_2} hold.
Then $\tilde{\sigma}_{n}^{-1}S_n$ is asymptotically standard normal.
\end{thm}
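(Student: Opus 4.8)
The plan is to apply the general normal-approximation result for symmetric statistics, Proposition 3 of Bloznelis and G\"otze \cite{BG_2001}, to the Hoeffding decomposition \eqref{short_exp}. Writing $S_n=U_1+R_1$ with linear part $U_1=\sum_{i=1}^n g_1(X_i)$ and remainder $R_1$, and recalling that $U_1$ and $R_1$ are centered and uncorrelated, we have the variance splitting $\tilde{\sigma}_n^2=\Var U_1+\E R_1^2$. The statement will follow once two facts are established: (i) the normalized linear statistic is asymptotically standard normal, and (ii) the remainder is asymptotically degenerate, $\E R_1^2=o(\tilde{\sigma}_n^2)$. Because of the assumption $\tilde{\sigma}_n\geq c_1>0$, the second fact reduces to the simpler estimate $\E R_1^2=o(1)$, which in turn yields $\Var U_1/\tilde{\sigma}_n^2\to 1$, so that asymptotic normality of $U_1$ transfers to $\tilde{\sigma}_n^{-1}S_n$.

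For (i), observe that $U_1$ is a normalized sum of the values $g_1(X_i)$ taken over a simple random sample drawn without replacement, i.e.\ a linear statistic of exactly the Erd\H{o}s--R\'enyi type, now for the fixed population $\{g_1(x_k):1\le k\le N\}$ in place of $\{x_k\}$. Condition \eqref{lind_2} is precisely the Erd\H{o}s--R\'enyi condition \eqref{lind_0} rewritten for the summands $g_1(X_1)$, with $\sigma_1^2=\E g_1^2(X_1)$ playing the role of $\sigma^2$ (recall $\E g_1(X_1)=0$, so $\sigma_1^2=\Var g_1(X_1)$). Hence the Erd\H{o}s--R\'enyi central limit theorem \cite{ER_1959, H_1960} applies directly and gives that $U_1$, divided by its standard deviation, is asymptotically standard normal as $n_*\to\infty$. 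The hypothesis $\E X_1^2\leq c_2<\infty$ ensures that $\sigma_1^2$ and the variances above are finite and well defined.

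The main obstacle is (ii), the estimate $\E R_1^2=o(1)$. By Theorem 1 of \cite{BG_2001} we have $\E R_1^2\leq\delta_2$, so it suffices to bound the explicit quantity $\delta_2$, which is assembled from the $L_2$-norms of the higher-order Hoeffding kernels $g_2,g_3,\dots$ of the $L$-statistic. The crux is that these kernels are controlled by the increments of the weights: passing from a first- to a second-order interaction corresponds, heuristically, to a single shift of a rank $j\mapsto j\pm 1$, which alters the applied weight only by $c_{j+1}-c_j$. Boundedness of $J$ gives $|c_j|\le C$, while the H\"older condition of order $\delta$ gives
\begin{equation*}
|c_{j+1}-c_j|=\Bigl|J\bigl(\tfrac{j+1}{n+1}\bigr)-J\bigl(\tfrac{j}{n+1}\bigr)\Bigr|\le C\,(n+1)^{-\delta}.
\end{equation*}
Carrying these bounds, together with the moment control $\E X_1^2\le c_2$, through the kernel representation in the spirit of \eqref{g_1}, one expects the second-order component to satisfy $\E U_2^2=O(n^{1-2\delta})$: there are $\binom{n}{2}=O(n^2)$ pairwise contributions, and each $\E g_2^2(X_1,X_2)$ is of order $n^{-1}$ (the first-order variance scale) times a squared increment $O(n^{-2\delta})$, while the still higher-order components are of smaller order. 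Hence $\delta_2=O(n^{1-2\delta})$, which tends to $0$ precisely because $\delta>1/2$; this is the natural threshold at which the aggregated squared weight increments $n\cdot n^{-2\delta}$ vanish, and it is exactly why the H\"older exponent must exceed $1/2$. Feeding (i) and (ii) into Proposition 3 of \cite{BG_2001} then yields the asymptotic standard normality of $\tilde{\sigma}_n^{-1}S_n$.
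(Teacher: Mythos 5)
Your overall architecture is right and matches the paper's: apply Proposition 3 of \cite{BG_2001}, use boundedness of $J$ for the linear part and the H\"older condition to control weight increments, and aim at the exponent $n^{1-2\delta}$. But the decisive estimate (ii) is never proved, only predicted. In \cite{BG_2001} the quantity $\delta_2$ is not assembled from the $L_2$-norms of the kernels $g_2,g_3,\dots$; it is $\delta_2(S_n)=\E\left(n_*\mathbb{D}_2 S_n\right)^2$, a second-order difference of the statistic over the extended sample $\mathbb{X}_2=\{X_1,\dots,X_{n+2}\}$, and this is precisely what makes the theorem provable: one bound on the second difference controls the whole remainder $\E R_1^2=\sum_{k\geq 2}\E U_k^2$ at once, so that no individual higher-order kernel ever has to be touched. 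Your claim that ``the still higher-order components are of smaller order'' is exactly the statement the $\delta_2$ machinery exists to avoid proving, and bounding each $g_k$ separately for an $L$-statistic under sampling without replacement is not a routine task. The paper's actual argument bounds $|\mathbb{D}_2 S_n|$, conditionally on the rank events $\mathfrak{R}_{2;ij}$, by $n^{-1/2}\big|\sum_p (c_p-c_{p-1})\mathbf{\Delta}_{p:n+2}\big|$ (Lemma 2 of \cite{C_2012}), inserts the H\"older bound \eqref{smooth_1}, and then needs genuine combinatorics --- the independence of $\mathfrak{R}_{2;ij}$ and $\mathfrak{B}_{2;ijlm}$, the explicit hypergeometric form of $p_{2;ijlm}$, and the generalized Vandermonde identity --- to reach $\lambda_{2;lm}\leq 24N^{-2}$ and hence $\delta_2(S_n)\leq 24B^2 n^{1-2\delta}\Var X_1$ as in \eqref{med_ineq_2}. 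Your order-of-magnitude bookkeeping ($\binom{n}{2}$ pairs times $n^{-1}\cdot n^{-2\delta}$) correctly guesses this exponent but establishes nothing.

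There is also a structural mismatch in part (i). Proposition 3 of \cite{BG_2001} takes as input the Lindeberg condition \eqref{lind_1}, with an \emph{absolute} truncation level $\varepsilon$, not the self-normalized condition \eqref{lind_2} and not ``asymptotic normality of $U_1$.'' The equivalence of \eqref{lind_1} and \eqref{lind_2} holds only once $\tilde{\sigma}_n$ is shown to be bounded above, and proving $\tilde{\sigma}_n^2=O(1)$ is the entire first half of the paper's proof: it uses the variance inequality \eqref{like_extr} of \v Ciginas and Pumputis \cite{CP_2012}, the bound $|c_p|\leq a$ from boundedness of $J$, and another rank/Vandermonde computation yielding $\tilde{\sigma}_n^2\leq a^2\frac{N-n}{N-1}\Var X_1$. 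You omit this step entirely. Your alternative framing --- Erd\H{o}s--R\'enyi CLT for $U_1$ under \eqref{lind_2}, plus a Slutsky argument from $\E R_1^2=o(1)$ and $\tilde{\sigma}_n\geq c_1$ --- is a legitimate way to sidestep \eqref{lind_1}, but then your closing sentence ``feeding (i) and (ii) into Proposition 3'' is incoherent, since Proposition 3 does not accept those inputs; and either way the proof still founders on the unestablished bound for $\delta_2$.
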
 
\noindent Note that, in comparison to the case of the sample mean, conditions on the finite population ${\cal X}$ remain very mild. Assumptions of Theorem \ref{t:norm_l}, sufficient for the asymptotic normality of $L$-statistics, are similar to that obtained by Stigler \cite{S_1974} in the \iid{} case.

Second, we consider an important special case of \eqref{L_stat}, i.e., the trimmed means. The trimmed mean is defined as follows: for any fixed numbers $0<t_1<t_2<1$,
\begin{equation*}\label{trimm_mean}
M_{t_1;t_2}= ([t_2 n] - [t_1 n])^{-1} \sum_{j=[t_1 n]+1}^{[t_2 n]} X_{j:n},
\end{equation*}
where $[\cdot]$ is the greatest integer function. The statistic $M_{t_1;t_2}$ is represented by the weight function $J(u)=(t_2-t_1)^{-1}\mathbb I\{t_1<u<t_2\}$. This function is not sufficiently smooth, i.e., $J(u)$ is bounded, but it does not satisfy the H{\" o}lder condition. Let us introduce an additional smoothness condition for the population ${\cal X}$. Assume that, without loss of generality, $x_1\leq \cdots \leq x_N$. Suppose that, for some constants $C>0$ and $1/2 <\delta \leq 1$, the inequality
\begin{equation}\label{ac_cond}
\left| x_m-x_l \right| \leq C N^{-\delta} \left| m-l \right|
\end{equation}
is satisfied for all $1\leq l<m \leq N$.
\begin{thm}\label{t:norm_trim}
Assume that $n_* \to \infty$ and $\tilde{\sigma}_{n}\geq c_1>0$ for all $n_*$.
Say that $\E X_1^2\leq c_2<\infty$. Assume that \eqref{ac_cond} is satisfied for some $1/2 <\delta \leq 1$, and $(1-n/N)^{-1} \, n^{1/2} N^{\delta-1} \to \infty$. Then, in the case of a trimmed mean, $\tilde{\sigma}_{n}^{-1}S_n$ is asymptotically standard normal.
\end{thm}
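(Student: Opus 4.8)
\section*{Proof proposal}

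The plan is to apply the general normal-approximation result for symmetric statistics (Proposition~3 of \cite{BG_2001}) to the decomposition $S_n=U_1+R_1$ of \eqref{short_exp}. Since $\tilde\sigma_n\geq c_1>0$, this reduces the claim to two tasks: (i) checking that the linear part $U_1=\sum_{i=1}^n g_1(X_i)$ obeys the Erd\H{o}s--R\'enyi condition \eqref{lind_2}, and (ii) showing that the remainder is asymptotically degenerate, $\delta_2:=\E R_1^2=o(1)$ as $n_*\to\infty$. The obstruction to simply invoking Theorem~\ref{t:norm_l} is that the weight function $J(u)=(t_2-t_1)^{-1}\mathbb I\{t_1<u<t_2\}$ jumps and is not H\"older; the purpose of \eqref{ac_cond} is to restore, at the level of the population ${\cal X}$, the smoothness that $J$ lacks.

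The single tool in both tasks is the Lipschitz-type bound supplied by \eqref{ac_cond}, namely $x_{i+1}-x_i\leq CN^{-\delta}$ and, more generally, $|x_m-x_l|\leq CN^{-\delta}|m-l|$. For task (i) I would insert this into the explicit formula \eqref{g_1}: the weights satisfy $|c_j|\leq(t_2-t_1)^{-1}$, the inner hypergeometric coefficients sum to at most $1$ over $j$, and the coefficients $\mathbb I\{i\geq k\}-i/N$ are of one sign on each side of $k$, so that a summation-by-parts estimate together with $x_{i+1}-x_i\leq CN^{-\delta}$ gives $\max_{1\leq k\leq N}|g_1(x_k)|\leq C'\,n^{-1/2}N^{1-\delta}$. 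Because $(\tau\sigma_1)^2=\tfrac{N-1}{N}\Var U_1\geq\tfrac{N-1}{N}(\tilde\sigma_n^2-\delta_2)$ is bounded below by a positive constant once $\delta_2=o(1)$, while $\tau^2=nq\to\infty$ with $n_*$, this uniform smallness of $g_1$ forces the truncated second moment in \eqref{lind_2} to vanish, i.e.\ \eqref{lind_2} holds.

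The heart of the proof is task (ii), the estimate of $\delta_2$. Here $R_1$ absorbs the nonlinearity of the order statistics, and its size is governed by the variance of the quadratic Hoeffding component; for sampling without replacement this variance carries a combinatorial factor of order $(1-n/N)^2=q^2$. The associated second-order kernel has, like \eqref{g_1} (derived as in \cite{C_2012}), the form of a double sum of increments $(x_{i+1}-x_i)(x_{i'+1}-x_{i'})$ weighted by hypergeometric coefficients and the jump-weights $c_j$. Bounding each increment by $CN^{-\delta}$ through \eqref{ac_cond}, and summing the hypergeometric weights, I expect a bound of the form $\delta_2\leq C''\,q^2 n^{-1}N^{2-2\delta}$. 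Since $q^2 n^{-1}N^{2-2\delta}=\big((1-n/N)\,n^{-1/2}N^{1-\delta}\big)^2$, the hypothesis $(1-n/N)^{-1}n^{1/2}N^{\delta-1}\to\infty$ is exactly the statement that this bound tends to $0$; thus $\delta_2=o(1)$.

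I expect task (ii) to be the main difficulty. Because $J$ jumps, the second-order kernel is not small pointwise, so the decay must come entirely from cancellation: one has to organize the double sum so that \eqref{ac_cond} can be applied through summation by parts to gain the factor $N^{-\delta}$ on each increment, while simultaneously keeping the $q^2$ from the without-replacement design and controlling the hypergeometric weights. Securing these two gains at once is the delicate step, and the growth condition is precisely the balance at which their product vanishes. Once (i) and (ii) are in place, Proposition~3 of \cite{BG_2001} yields the asymptotic standard normality of $\tilde\sigma_n^{-1}S_n$.
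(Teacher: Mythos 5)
Your reduction and your step (i) essentially coincide with the paper's proof. The paper likewise bounds $\max_{1\leq k\leq N}|g_1(x_k)|\leq \tfrac{aC}{2}\,n^{-1/2}N^{-\delta}(N-1)$ from \eqref{g_1}, \eqref{smooth_0} and \eqref{ac_cond}, using that the hypergeometric weights in \eqref{g_1} sum to one over $j$; but it then verifies \eqref{lind_1} directly, since the resulting indicator $\mathbb I\{\tfrac{aC}{2}n^{-1/2}N^{1-\delta}>\varepsilon\}$ is deterministic and eventually zero (the hypothesis together with $\delta>1/2$ forces $n^{-1/2}N^{1-\delta}\to 0$ in both regimes $n/N$ bounded away from $1$ and $n/N\to 1$ --- a point you should spell out). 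This is cleaner than your detour through a lower bound on $\tau\sigma_1$, which is valid but makes (i) depend on (ii); if you keep it, the order of the two steps must be made explicit.

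The genuine gap is step (ii): you never derive the bound $\delta_2\leq C''q^2n^{-1}N^{2-2\delta}$ --- you only ``expect'' it --- and the mechanism you sketch (cancellation extracted by summation by parts from a double sum over all increments $(x_{i+1}-x_i)(x_{i'+1}-x_{i'})$) is not what closes the argument; your remark that ``the decay must come entirely from cancellation'' is in fact wrong. The observation your proposal is missing is structural and elementary: for the trimmed mean the differenced weights $\tilde c_p-\tilde c_{p-1}$, with $\tilde c_p=\mathbb I\{s\leq p\leq t\}$, $s=[t_1n]+1$, $t=[t_2n]$, are supported on exactly two indices, $p=s$ and $p=t+1$. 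Hence, after conditioning on the rank events $\mathfrak R_{2;ij}$ (Lemma 2 of \cite{C_2012}, giving \eqref{med_ineq_3}), the second-order difference collapses to $A_{ij}\in\{0,\ \mathbf\Delta_{s:n+2},\ -\mathbf\Delta_{t+1:n+2},\ \mathbf\Delta_{s:n+2}-\mathbf\Delta_{t+1:n+2}\}$ according to a six-way partition of the pairs $(i,j)$, and the mixed case is handled by \emph{positivity} of spacings, $\E(\mathbf\Delta_{s:n+2}-\mathbf\Delta_{t+1:n+2})^2\leq\E\mathbf\Delta_{s:n+2}^2+\E\mathbf\Delta_{t+1:n+2}^2$ --- the opposite of cancellation. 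The quantitative use of \eqref{ac_cond} is then a second-moment bound for a \emph{single} spacing: a sample spacing spans a population-rank gap $m-l$ of typical size $N/n$, so the pointwise bound $\mathbf\Delta_{p:n+2}\leq CN^{-\delta}(m-l)$ must be averaged over the hypergeometric law of $(l,m)$, giving (Lemma 2.1 of \cite{BCP_2003} plus binomial identities) $\E\mathbf\Delta_{p:n+2}^2\leq C^2N^{-2\delta}(N+1)(2N-n)/((n+3)(n+4))=O(N^{2-2\delta}/n^2)$, as in \eqref{ac_cond_conseq}; your sketch, which bounds each increment by $CN^{-\delta}$ and ``sums the hypergeometric weights'', does not account for this $(m-l)^2$ factor. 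Combining with the counting bounds \eqref{bin_ineq_1}--\eqref{bin_ineq_2} for the probabilities $p_{2;ij}$ of \eqref{p2ij}, and with $n_*\leq 2n(1-n/N)$ (this is where the factor $(1-n/N)^2$ enters --- not through any special variance formula for the quadratic Hoeffding component), yields exactly your conjectured $\delta_2(S_n)\leq C_2(1-n/N)^2N^{2(1-\delta)}/n=o(1)$. So your target bound and the assembly into Proposition 3 of \cite{BG_2001} are correct, but the central estimate is asserted rather than proved, and the route you indicate would not produce it without the locality-of-jumps observation above.
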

\noindent In the case of \iid{} observations, it was shown by Stigler \cite{S_1973} that in order for the trimmed mean to be asymptotically normal, it is necessary and sufficient that the sample is trimmed at sample quantiles for which the corresponding population quantiles are uniquely defined. Thus, the conditions of Theorem \ref{t:norm_trim} seem too strong. On the other hand, in finite population settings, the new smoothness condition (\ref{ac_cond}) has a specific interpretation. Let us take $l=1$ and $m=N$. If the population ${\cal X}$ is bounded, then the condition is satisfied for $\delta=1$. For any finite population, condition (\ref{ac_cond}) is satisfied in the marginal case of $\delta=1/2$. The latter fact follows from the Nair--Thomson inequality $\left|x_N-x_1\right| \leq \sigma\sqrt{2N}$ (see, e.g., Balakrishnan \emph{et \al} \cite{BCP_2003}). Thus, condition (\ref{ac_cond}) seems very mild for small $\theta>0$ in $\delta=1/2+\theta$, i.e., it holds for most of possible populations. Obviously, if we are interested in the asymptotic normality of the trimmed means, then, by the conditions of Theorem \ref{t:norm_trim}, for small $\theta$ we should have $n \to \infty$ quite quickly as $N \to \infty$, while in the case of $\delta=1$ it suffices that $n \to \infty$ arbitrarily slowly with respect to the grow of the population size $N$.

\section{Proofs}\label{s:2}

In the proofs of Theorems \ref{t:norm_l} and \ref{t:norm_trim}, we assume that, without loss of generality, $x_1\leq \cdots \leq x_N$.

\begin{proof}[Proof of Theorem \ref{t:norm_l}]

First, we show that $\tilde{\sigma}_{n}$ is bounded as $n_* \to \infty$. Then the condition: for every $\varepsilon>0$,
\begin{equation}\label{lind_1}
n_*\E g_1^2(X_1) \mathbb{I}{\{ g_1^2(X_1)>\varepsilon\}}=o(1)
\quad \text{as} \quad n_* \to \infty
\end{equation}
of Proposition 3 in Bloznelis and G{\" o}tze \cite{BG_2001} is equivalent (see ibidem) to condition \eqref{lind_2}. It is shown by \v Ciginas and Pumputis \cite{CP_2012} that, for any symmetric statistic, the inequality
\begin{equation}\label{like_extr}
\tilde{\sigma}_{n}^2\leq \frac{1}{2} n\left(1-\frac{n}{N}\right) \E \left(\mathbb D_1 S_n \right)^2
\end{equation}
holds. Here $\mathbb D_1 S_n=S_n(\mathbb X_1\backslash{\{X_{n+1}\}})-S_n(\mathbb X_1\backslash{\{X_1\}})$, where $\mathbb X_1=\{X_1, \dots, X_{n+1}\}$ is the extended sample. Introduce the events $\mathfrak{R}_{1;ij}=\{ R_{1:2}=i, R_{n+1:2}=j\}$, $1\leq i<j\leq n+1$, where $R_{1:2}<R_{n+1:2}$ denote the order statistics of the ranks $\{R_1, R_{n+1}\}$ of $\{X_1, X_{n+1}\}$ in the set $\mathbb X_1$. Here all ranks $\{R_1,\ldots,R_{n+1}\}$ of $\mathbb X_1$ are distinct if, in the case of ties on ${\cal X}$, we order (select ranks for) tied observations randomly with equal probabilities. The probabilities of the events are
\begin{equation*}\label{p1ij}
p_{1;ij}:=\PP \left\{ \mathfrak{R}_{1;ij}\right\}={n+1\choose 2}^{-1}. 
\end{equation*}
Since $J(\cdot)$ is bounded, there exists an absolute constant $a$ that
\begin{equation}\label{smooth_0}
\max_{1\leq p \leq n} \left| c_p \right| \leq a
\end{equation}
for all $n$. By Lemma 2 of \cite{C_2012} and (\ref{smooth_0}), we obtain
\begin{equation}\label{med_ineq}
\begin{split} 
\E \left(\mathbb D_1 S_n \right)^2&=\sum_{1\leq i<j\leq n+1} \E \left[ \left(\mathbb D_1 S_n \right)^2 \,\middle|\, \mathfrak{R}_{1;ij}\right] p_{1;ij} \\
&\leq n^{-1} \sum_{1\leq i<j\leq n+1} \E \left[ \bigg( \sum_{p=R_{1:2}}^{R_{n+1:2}-1} c_p \mathbf\Delta_{p:n+1} \bigg)^2 \,\middle|\, \mathfrak{R}_{1;ij}\right] p_{1;ij} \\
&\leq a^2 n^{-1} \sum_{1\leq i<j\leq n+1} \E \left[ \left( X_{j:n+1}-X_{i:n+1} \right)^2 \,\middle|\, \mathfrak{R}_{1;ij}\right] p_{1;ij},
\end{split}
\end{equation}
where $\mathbf\Delta_{p:n+1}=X_{p+1:n+1}-X_{p:n+1}$, $1\leq p\leq n$ denote the spacings of the sample $\mathbb X_1$.
Since the events $\mathfrak{R}_{1;ij}$ and $\mathfrak{B}_{1;ijlm}=\left\{ X_{i:n+1}=x_l, X_{j:n+1}=x_m  \right\}$, $1\leq l<m\leq N$ are independent, for $x_1<\cdots <x_N$ we get
\begin{equation*}
p_{1;ijlm}:=\PP \left\{ \mathfrak{B}_{1;ijlm} \,\middle|\, \mathfrak{R}_{1;ij} \right\}=
{l-1\choose i-1}{m-l-1\choose j-i-1}{N-m\choose n+1-j}\left/{N\choose n+1}\right. .
\end{equation*}
For $x_1\leq \cdots \leq x_N$ these probabilities are the same. It follows from an argument similar to Lemma 2.1 of Balakrishnan \emph{et \al} \cite{BCP_2003}. We also have that, by the generalized Vandermonde identity,
\begin{equation*}
\begin{split}
\sum_{1\leq i<j\leq n+1} p_{1;ijlm}
&={N\choose n+1}^{-1} \sum_{s=0}^{n-1} \sum_{t=0}^{n-1-s} {l-1\choose s}{m-l-1\choose t}{N-m\choose n-1-s-t} \\
&={N\choose n+1}^{-1} {N-2\choose n-1}.
\end{split}
\end{equation*} 
Then note that 
\begin{equation*}\label{var_expr}
\Var X_1=\frac{1}{N^2}\sum_{1\leq l<m\leq N} (x_m-x_l)^2
\end{equation*}
and continue (\ref{med_ineq}):
\begin{equation*}\label{end_ineq}
\begin{split}
\E \left(\mathbb D_1 S_n \right)^2&
\leq  a^2 n^{-1} \sum_{1\leq i<j\leq n+1} \bigg[ \sum_{1\leq l<m\leq N} (x_m-x_l)^2 p_{1;ijlm} \bigg] p_{1;ij} \\
&= a^2 n^{-1} {n+1\choose 2}^{-1} {N\choose n+1}^{-1} {N-2\choose n-1} \sum_{1\leq l<m\leq N} (x_m-x_l)^2 
= 2 a^2 n^{-1} \frac{N}{N-1} \Var X_1.
\end{split}
\end{equation*}
Finally, from (\ref{like_extr}) we get
\begin{equation*}
\tilde{\sigma}_{n}^2\leq a^2 \frac{N-n}{N-1}  \Var X_1 =O(1) \quad \text{as} \quad n_* \to \infty.
\end{equation*}

Second, we show that, under the conditions of the theorem, the condition $\delta_2(S_n)=o(1)$ as $n_* \to \infty$ of Proposition 3 in \cite{BG_2001} is satisfied. Here $\delta_2(S_n)=\E \left(n_*{\mathbb D}_2 S_n\right)^2$, where
\begin{equation*}
{\mathbb D}_2 S_n=S_n(\mathbb X_2\backslash{\{X_{n+1},X_{n+2}\}})-S_n(\mathbb X_2\backslash{\{X_1,X_{n+2}\}})-S_n(\mathbb X_2\backslash{\{X_2,X_{n+1}\}})+S_n(\mathbb X_2\backslash{\{X_1,X_2\}})
\end{equation*}
with the extended sample $\mathbb X_2=\{X_1, \dots, X_{n+2}\}$, see \cite{BG_2001}. Similarly, introduce the events $\mathfrak{R}_{2;ij}=\{ R_{2:4}=i, R_{n+1:4}=j\}$, $1\leq i<j\leq n+2$, where $R_{1:4}<R_{2:4}<R_{n+1:4}<R_{n+2:4}$ denote the order statistics of the ranks $\{R_1, R_2, R_{n+1}, R_{n+2}\}$ of $\{X_1, X_2, X_{n+1}, X_{n+2}\}$ in the set $\mathbb X_2$. Now
\begin{equation}\label{p2ij}
p_{2;ij}:=\PP \left\{ \mathfrak{R}_{2;ij} \right\}=
{i-1\choose 1}{n+2-j\choose 1}\left/{n+2\choose 4}\right. .
\end{equation}
We also similarly have 
\begin{equation*}
p_{2;ijlm}:=\PP \left\{ \mathfrak{B}_{2;ijlm} \,\middle|\, \mathfrak{R}_{2;ij} \right\}=
{l-1\choose i-1}{m-l-1\choose j-i-1}{N-m\choose n+2-j}\left/{N\choose n+2}\right. ,
\end{equation*}
where the events $\mathfrak{R}_{2;ij}$ and $\mathfrak{B}_{2;ijlm}=\left\{ X_{i:n+2}=x_l, X_{j:n+2}=x_m  \right\}$, $1\leq l<m\leq N$ are independent.
Since $J(\cdot)$ satisfies the H{\" o}lder condition of order $\delta>1/2$ on $(0, 1)$, we find that
\begin{equation*}
\left| c_p-c_{p-1} \right| = \left| J\left(\frac{p}{n+1}\right)-J\left(\frac{p-1}{n+1}\right) \right| \leq B (n+1)^{-\delta}
\end{equation*}
or 
\begin{equation}\label{smooth_1}
\max_{2\leq p \leq n} \left| c_p-c_{p-1} \right| \leq B (n+1)^{-\delta}, \quad \text{for some} \quad \delta>1/2.
\end{equation}
By Lemma 2 of \cite{C_2012} and (\ref{smooth_1}), we obtain
\begin{equation}\label{med_ineq_2}
\begin{split}
\delta_2(S_n)&=n_*^2 \sum_{1\leq i<j\leq n+2} \E \left[ \left(\mathbb D_2 S_n \right)^2 \,\middle|\, \mathfrak{R}_{2;ij}\right] p_{2;ij} \\
&\leq n_*^2 n^{-1} \sum_{1\leq i<j\leq n+2} \E \left[ \bigg( \sum_{p=R_{2:4}}^{R_{n+1:4}-1} (c_p-c_{p-1}) \mathbf\Delta_{p:n+2} \bigg)^2 \,\middle|\, \mathfrak{R}_{2;ij}\right] p_{2;ij} \\
&\leq B^2 n_*^2 n^{-1}(n+1)^{-2\delta} \sum_{1\leq i<j\leq n+2} \E \left[ \left( X_{j:n+2}-X_{i:n+2} \right)^2 \,\middle|\, \mathfrak{R}_{2;ij}\right] p_{2;ij} \\
&= B^2 n^{1-2\delta} \sum_{1 \leq l<m \leq N} \lambda_{2;lm} (x_m-x_l)^2,
\end{split}
\end{equation}
where $\lambda_{2;lm}=\sum_{1 \leq i<j \leq n+2} p_{2;ij}p_{2;ijlm}$. Taking $j=i+1$ and applying $\max_{0\leq u \leq 1} u(1-u)\leq 1/4$, for all $1\leq i<j\leq n+2$, we get the inequalities
\begin{equation*}
p_{2;ij} \leq n^2 {n+2\choose 4}^{-1} \frac{i-1}{n} \left( 1-\frac{i-1}{n} \right) 
\leq \frac{1}{4} n^2 {n+2\choose 4}^{-1}.
\end{equation*}
Then, noting that, by the generalized Vandermonde identity, 
\begin{equation*}
\sum_{1\leq i<j\leq n+2} p_{2;ijlm}={N\choose n+2}^{-1} {N-2\choose n},
\end{equation*}
we obtain, for all $1 \leq l<m \leq N$,
\begin{equation*}
\lambda_{2;lm} \leq \frac{1}{4} n^2 {n+2\choose 4}^{-1} {N\choose n+2}^{-1} {N-2\choose n} \leq 24 N^{-2}.
\end{equation*}
Finally, it follows from this bound and (\ref{med_ineq_2}) that
\begin{equation*}
\delta_2(S_n) \leq 24 B^2 n^{1-2\delta} \Var X_1 = o(1) \quad \text{as} \quad n_* \to \infty.
\end{equation*}
All the conditions of Proposition 3 in \cite{BG_2001} are verified. Thus, the theorem is proven.
\end{proof}

\begin{proof}[Proof of Theorem \ref{t:norm_trim}]

First, we show that condition \eqref{ac_cond} with $(1-n/N)^{-1} \, n^{1/2} N^{\delta-1} \to \infty$ imply \eqref{lind_1}. Noting that
\begin{equation*}
\sum_{j=1}^{n} {i-1\choose j-1}{N-i-1\choose n-j}{N-2\choose n-1}^{-1}=1,
\end{equation*}
applying \eqref{smooth_0} and \eqref{ac_cond}, we get from \eqref{g_1}, 
\begin{equation*}
\max_{1\leq k\leq N} |g_1(x_k)|\leq aC n^{-1/2} N^{-\delta} \max_{1\leq k\leq N} \sum_{i=1}^{N-1} \left| \mathbb{I}{\{i\geq k\}}-\frac{i}{N} \right|=\frac{aC}{2} n^{-1/2} N^{-\delta} (N-1).
\end{equation*}
Therefore, for a fixed $\varepsilon>0$, 
\begin{equation*}
\mathbb{I}{\{ |g_1(X_1)|>\varepsilon\}} \leq \mathbb{I}{\Big\{ \max_{1\leq k\leq N} |g_1(x_k)|>\varepsilon\Big\}} \leq \mathbb{I}{\Big\{ \frac{aC}{2} n^{-1/2} N^{1-\delta}>\varepsilon\Big\}}.  
\end{equation*}
We obtain from here and from (3.9) of \cite{C_2012} that
\begin{equation*}
n_*\E g_1^2(X_1) \mathbb{I}{\{ |g_1(X_1)|>\varepsilon\}}\leq \mathbb{I}{\Big\{ \frac{aC}{2} n^{-1/2} N^{1-\delta}>\varepsilon\Big\}} n_*\E g_1^2(X_1) 
\leq 4a^2 \mathbb{I}{\Big\{ n^{-1/2} N^{1-\delta}>\frac{2\varepsilon}{aC}\Big\}} \E |X_1|^2.
\end{equation*} 
Condition \eqref{lind_1} is proven.

Second, as in the proof of Theorem \ref{t:norm_l}, we verify the condition $\delta_2(S_n)=o(1)$ as $n_* \to \infty$. Write, for short, $s=[t_1 n]+1$ and $t=[t_2 n]$. Similarly, applying Lemma 2 of \cite{C_2012}, we obtain 
\begin{equation}\label{med_ineq_3}
\delta_2(S_n) \leq \frac{n_*^2 n}{(t-s+1)^2} \sum_{1\leq i<j\leq n+2} p_{2;ij} \E A_{ij}^2(s,t),
\end{equation}
where $p_{2;ij}$ is given by (\ref{p2ij}) and
\begin{equation*}
A_{ij}(s,t)=\sum_{p=i}^{j-1} (\tilde{c}_p-\tilde{c}_{p-1}) \mathbf{\Delta}_{p:n+2} \quad \text{with} \quad \tilde{c}_p=\mathbb I\{s\leq p \leq t\}.
\end{equation*} 
We can assume, without loss of generality, that $n>(t_2-t_1)^{-1}$. Then we have $s<t$. It also follows from the inequality $[t_2 n]-[t_1 n] \geq t_2 n - 1 - t_1 n$ and from the same assumption that, for some constant $C_1>0$,
\begin{equation}\label{floor_ineq}
\frac{n^2}{(t-s+1)^2} \leq \left( t_2-t_1-\frac{1}{n} \right)^{-2} \leq C_1.
\end{equation}
Let us decompose ${\cal I}=\{ (i, j): 2\leq i<j\leq n+1\}$, for fixed $s<t$, into mutually disjoint subsets
\begin{align*}
&{\cal I}_1=\{ (i, j): t+2\leq i<j\leq n+1 \}, \\
&{\cal I}_2=\{ (i, j): 2\leq i<j\leq s \}, \\
&{\cal I}_3=\{ (i, j): s+1\leq i<j\leq t+1 \}, \\
&{\cal I}_4=\{ (i, j): s+1\leq i\leq t+1, \, t+2\leq j\leq n+1 \}, \\
&{\cal I}_5=\{ (i, j): 2\leq i\leq s, \, s+1\leq j\leq t+1 \}, \\
&{\cal I}_6=\{ (i, j): 2\leq i\leq s, \, t+2\leq j\leq n+1 \},
\end{align*}
such that ${\cal I}={\cal I}_1 \cup \cdots \cup {\cal I}_6$. Then we get
\begin{equation*}
A_{ij}(s,t)=
\begin{cases}
0                                      &\text{if $(i, j)\in {\cal I}_1 \cup {\cal I}_2 \cup {\cal I}_3$,} \\
-\tilde{c}_t \mathbf{\Delta}_{t+1:n+2} &\text{if $(i, j)\in {\cal I}_4$,} \\
\tilde{c}_s \mathbf{\Delta}_{s:n+2} &\text{if $(i, j)\in {\cal I}_5$,} \\
\tilde{c}_s \mathbf{\Delta}_{s:n+2}-\tilde{c}_t \mathbf{\Delta}_{t+1:n+2} &\text{if $(i, j)\in {\cal I}_6$.} 
\end{cases}
\end{equation*}
Now, by collecting the terms of the sum $\sum_{i<j}$ with the same value of $\E A_{ij}^2(s,t)$ in (\ref{med_ineq_3}), applying $\E (\mathbf{\Delta}_{t+1:n+2} - \mathbf{\Delta}_{s:n+2})^2 \leq \E \mathbf{\Delta}_{t+1:n+2}^2 + \E \mathbf{\Delta}_{s:n+2}^2$, and then collecting terms with $\E \mathbf{\Delta}_{t+1:n+2}^2$ and $\E \mathbf{\Delta}_{s:n+2}^2$, and also invoking inequality (\ref{floor_ineq}), we obtain 
\begin{equation}\label{med_ineq_4}
\begin{split}
\delta_2(S_n) \leq C_1 n_*^2 n^{-1} {n+2\choose 4}^{-1}
&\bigg[ {t+1\choose 2}{n-t+1\choose 2} \E \mathbf{\Delta}_{t+1:n+2}^2 \\
& + {s\choose 2}\bigg\{ (n-t+1)^2-{n-s+1\choose 2} \bigg\} \E \mathbf{\Delta}_{s:n+2}^2 \bigg].
\end{split}
\end{equation}
By applying the simple inequality ${u\choose v} \leq u^v/v!$, we derive
\begin{equation}\label{bin_ineq_1}
{t+1\choose 2}{n-t+1\choose 2} \leq \frac{(n+2)^4}{4} 
\left[  \frac{t+1}{n+2} \left( 1-\frac{t+1}{n+2} \right) \right]^2 \leq \frac{(n+2)^4}{64}. 
\end{equation}
Taking $s=t$, very similarly we get
\begin{equation}\label{bin_ineq_2}
{s\choose 2}(n-t+1)^2 \leq \frac{(n+1)^4}{2} 
\left[  \frac{t}{n+1} \left( 1-\frac{t}{n+1} \right) \right]^2 \leq \frac{(n+1)^4}{32}. 
\end{equation}
Next, it is easy to calculate (invoking Lemma 2.1 of Balakrishnan \emph{et \al} \cite{BCP_2003}) that, for $1\leq p\leq n+1$,
\begin{equation*}
\E \mathbf{\Delta}_{p:n+2}^2=
{N\choose n+2}^{-1} \sum_{1\leq l<m\leq N} {l-1\choose p-1}{m-l-1\choose 0}{N-m\choose n+1-p} (x_m-x_l)^2.
\end{equation*}
Then, using (\ref{ac_cond}), we obtain
\begin{equation}\label{ac_cond_conseq}
\E \mathbf{\Delta}_{p:n+2}^2 \leq
\frac{C^2}{N^{2\delta}} {N\choose n+2}^{-1} \sum_{1\leq l<m\leq N} (m-l)^2 {l-1\choose p-1}{N-m\choose n+1-p} 
=\frac{C^2}{N^{2\delta}} \frac{(N+1)(2N-n)}{(n+3)(n+4)}.
\end{equation}
Here the last equality is obtained by applying simple binomial identities ${m\choose k}=\frac{m}{k}{m-1\choose k-1}$, $1\leq k\leq m$,
\begin{equation*}
\sum_{j=k}^m {j\choose k}={m+1\choose k+1}, \quad 0\leq k\leq m \quad\,  \text{and} \quad\, \sum_{p=0}^m {p\choose j}{m-p\choose k-j}={m+1\choose k+1}, \quad 0\leq j\leq k\leq m.
\end{equation*}
Indeed, for instance,
\begin{equation*}
\begin{split}
\sum_{1\leq l<m\leq N} & m^2  {l-1\choose p-1}{N-m\choose n+1-p}=
\sum_{m=2}^N \bigg\{ \sum_{l=1}^{m-1} {l-1\choose p-1} \bigg\} m^2 {N-m\choose n+1-p} \\ 
&=\sum_{m=2}^N m^2 {m-1\choose p}{N-m\choose n+1-p}=(p+1)\sum_{m=2}^N m {m\choose p+1}{N-m\choose n+1-p} \\
&=(p+1)(p+2)\sum_{m=2}^N {m+1\choose p+2}{N-m\choose n+1-p}-(p+1)\sum_{m=2}^N {m\choose p+1}{N-m\choose n+1-p} \\
&=(p+1)(p+2) {N+2\choose n+4} - (p+1) {N+1\choose n+3},
\end{split}
\end{equation*}
and so on. Finally, applying (\ref{bin_ineq_1}), (\ref{bin_ineq_2}) and (\ref{ac_cond_conseq}), and $n_* \leq 2n(1-n/N)$, we continue (\ref{med_ineq_4}):  
\begin{equation*}
\begin{split}
\delta_2(S_n) &\leq C_1 n_*^2 n^{-1} {n+2\choose 4}^{-1}
\frac{C^2}{N^{2\delta}} \frac{(N+1)(2N-n)}{(n+3)(n+4)}
\left[ \frac{(n+2)^4}{64} + \frac{(n+1)^4}{32} \right] \\
&\leq C_2 \left( 1-\frac{n}{N} \right)^2 \frac{N^{2(1-\delta)}}{n}= o(1) \quad \text{as} \quad n_* \to \infty,
\end{split}
\end{equation*}
for some constant $C_2>0$. The theorem is proven.
\end{proof}

\small

\bibliographystyle{abbrv} 
\bibliography{liter_abc}
 
\end{document}